\documentclass[11pt]{amsart}

\usepackage{latexsym,amssymb,amsthm,amsmath,amscd,multirow,graphics,multicol}
\usepackage{amssymb} 

\theoremstyle{plain}  
\newtheorem{theorem}{Theorem}[section]

\newtheorem{corollary}{Corollary}[section]

\newtheorem*{question}{Question}
 
\newtheorem{example}{Example}[section]
\numberwithin{equation}{section}

\theoremstyle{remark}
\newtheorem{remark}{Remark}[section]

 \numberwithin{equation}{section}
\def\<{\left < }
\def\>{\right >}
\def\({\left ( }
\def\){\right )}
\def\e{\eqref}
\def\e{\eqref}
\def\p{\partial }

\def\o{\omega }

\def\M{M^2_1 }

\def\x{\text{\small$\frac{\partial}{\partial x}$}}
\def\y{\text{\small$\frac{\partial}{\partial y}$}}

\def\sech{\hskip.02in {\rm sech}}

\begin{document}

\title[Minimal Lorentz surfaces in  indefinite space forms]
{Classification of minimal Lorentz surfaces in  indefinite space forms  with arbitrary  codimension and arbitrary index}

\author[B.-Y. Chen]{Bang-Yen Chen}

 \address{Department of Mathematics, 
	Michigan State University, East Lansing, MI 48824, U.S.A.}
	
\email{bychen@math.msu.edu}

\begin{abstract} Since J. L. Lagrange  initiated in \cite{L} the study of minimal surfaces of Euclidean 3-space in 1760,  minimal surfaces in real space forms have  been studied extensively  by many mathematicians during the last two and half centuries. In contrast, so far very few results on minimal Lorentz surfaces in indefinite space forms are known. Hence, in this paper we investigate minimal Lorentz surfaces in arbitrary indefinite space forms. As a consequence, we obtain several classification results for minimal Lorentz surfaces in indefinite space forms. In particular, we completely classify all minimal Lorentz surfaces in a pseudo-Euclidean space $\mathbb E^m_s$ with arbitrary dimension $m$ and arbitrary index $s$.
\end{abstract}

\keywords{Minimal surface,  Lorentz surface, pseudo-Euclidean space, pseudo sphere, pseudo-hyperbolic space.}

 \subjclass[2000]{Primary: 53C40; Secondary  53C50}
 
\maketitle

\section{Introduction.}

Let $\mathbb E_{s}^m$ denote the  pseudo-Euclidean $m$-space with the canonical  metric of  index $s$ given by
\begin{align}\label{1.1}g_{0}= -\text{\small$\sum$}_{i=1}^{s} dx_{i}^{2} + \text{\small$\sum$}_{j=s+1}^{m}dx_{j}^{2}, \end{align}
where $(x_{1},\ldots,x_{m})$ is a rectangular coordinate system of  $\mathbb E_{s}^m$. 
The {\it light cone} ${\mathcal LC}$ of $\mathbb E^{m+1}_s$ is defined by ${\mathcal LC}=\{{\bf x}\in \mathbb E^{m+1}_s:\<{\bf x},{\bf x}\>=0\}$.
  
  We put
\begin{align} &\label{1.2} S^k_s(c)=\{x\in \mathbb E^{k+1}_s| \<x,x\>={c}^{-1}>0\},\\&\label{1.3} H^k_s(-c)=\{x\in \mathbb E^{k+1}_{s+1}| \<x,x\>=-c^{-1}<0\},\end{align}
where $\<\;,\:\>$ denotes the indefinite inner product on $\mathbb E^{k+1}_t$. The
$S^k_s(c)$ and $H^k_s(-c)$ are complete pseudo-Riemannian manifolds with index $s$ and of constant curvature $c$ and $-c$, respectively. 

The $S^k_s(c)$ and $H^k_s(-c)$ are called {\it pseudo $k$-sphere} and {\it pseudo hyperbolic $k$-space}, respectively. The pseudo-Riemannian manifolds $\mathbb E^k_s, S^k_s$ and $H^k_s$  are known as the {\it indefinite space forms}. In particular, $\mathbb E^k_1, S^k_1$ and $H^k_1$ are called Minkowski, de Sitter and anti-de Sitter spacetimes, which play very important roles in relativity theory. 
    
    The history of minimal surfaces goes back to J. L. Lagrange (1736-1813) who initiated in 1760  the  study of minimal surfaces in Euclidean 3-space (see \cite{L}). Since then  minimal surfaces have attracted many mathematician. In particular, minimal surfaces in real space forms have  been studied very extensively during the last two and half centuries (see, \cite[pages 207--249]{c3} and \cite{Ni,Os} for details).
    
    In \cite{VP1,VP2}, L. Verstraelen and M. Pieters studied some families of Lorentz surfaces in 4-dimensional indefinite space forms with index 2. Recently, parallel Lorentz surfaces in indefinite space forms with arbitrary codimension and arbitrary index  were completely classified in a series of articles  \cite{c6}-\cite{cv} (see also \cite{B,g1,g2,Ma}). Moreover, Lorentz surfaces with parallel mean curvature vector in an arbitrary pseudo-Euclidean space were classified in \cite{c11} (see also \cite{HY}).
    Further,  minimal Lorentz surfaces in Lorentzian complex space forms $\tilde M^2_1(c)$ with complex index one were investigated in \cite{c4,c5}. 

  In this paper, we study minimal Lorentz surfaces in indefinite space forms $R^m_s(c)$ with arbitrary codimension and arbitrary index s. In particular, we completely classify minimal Lorentz surfaces in an arbitrary pseudo-Euclidean space in section 4. In section 5, we classify  minimal Lorentz surfaces of constant curvature one in an arbitrary  pseudo $m$-sphere $S^m_s(1)$. The classification of minimal Lorentz surfaces of constant curvature $-1$ in a pseudo-hyperbolic $m$-space $H^m_s(-1)$ are obtained in section 6. In the last two sections, we provide many explicit examples of  minimal Lorentz surfaces in $S^m_s(1)$ and in $H^m_s(-1)$. 
 
\section{Basics formulas, equations and definitions.}

 Let  $R^m_s(c)$ be an $m$-dimensional indefinite space form of constant sectional curvature $c$ and with index $s$. 
The curvature tensor  of $R^m_s(c)$ is given by
\begin{align} &\label{2.1}\tilde R(X,Y)Z=c\{\left<Y,Z\right>X -  \left<X,Z\right>  Y \}.\end{align}
   
    Let $\psi:M^2_1 \to R^m_s(c)$ be an isometric immersion of a  Lorentz surface $M^2_1$ into $R^m_s(c)$.       
Denote by $\nabla$ and $\tilde\nabla$ the Levi-Civita connections on $M^2_1$ and $\tilde R^m_s(c)$, respectively. 
Let $X,Y$ be vector fields tangent to $M^2_1$ and  $\xi$  normal to $M^2_1$ in $R^m_s(c)$. The formulas of Gauss and Weingarten  are given by (cf. \cite{c1,c2,O}):
\begin{align} &\label{2.2}\tilde \nabla_XY=\nabla_XY+h(X,Y), \;\;
\\& \label{2.3}\tilde \nabla_X \xi=-A_\xi X+D_X\xi.\end{align} These formulas define $h$, $A$ and $D$, which are called the second
fundamental form, the shape operator and the normal connection, respectively. 

For each normal vector $\xi\in T_x^{\perp}M^2_1$, the shape operator $A_{\xi}$ at $\xi$ is a symmetric endomorphism of the tangent space $T_xM^2_1$,  $x\in M^2_1$. The shape operator and the second fundamental form are related by
\begin{align}\label{2.4} \<h(X,Y),\xi\>=\<A_{\xi}X,Y\>.\end{align}
 The mean curvature vector $H$ of $M^2_1$ in $R^m_s(c)$ is defined by \begin{align}\label{2.5} H=\text{\small$ \frac{1}{2}$} {\rm trace}\, h.\end{align}
 A Lorentz surface in an indefinite space form is called {\it totally geodesic} if its second fundamental form  vanishes identically.  It  is called  {\it minimal } if its mean curvature vector vanishes identically.

The equations of Gauss, Codazzi and Ricci are given
respectively by
\begin{align} &\label{2.6} R(X,Y)Z =c\{\left<Y,Z\right>X -  \left<X,Z\right>  Y \} + A_{h(Y,Z)}X- A_{h(X,Z)}Y,\\
&  \label{2.7} (\overline \nabla_X h)(Y,Z)=(\overline\nabla_Y h)(X,Z),\\ \label{2.8}
& \<R^{D}(X,Y)\xi,\eta\> = \<[A_{\xi},A_{\eta}]X,Y\>,\end{align}
for vector fields $X,Y,Z$  tangent to $M^2_1$, $\xi$  normal to $M^2_1$, where $\overline\nabla h$ is defined by
\begin{equation}\label{2.9}(\overline\nabla_X h)(Y,Z) = D_X h(Y,Z) - h(\nabla_X Y,Z) - h(Y,\nabla_X Z),\end{equation} and $R^D$ is the curvature tensor associated to the normal connection $D$, i.e., 
\begin{align}\label{2.10} &R^D(X,Y)\xi=D_XD_Y\xi-D_Y D_X\xi-D_{[X,Y]}\xi.\end{align}

A  vector $v$ in $R^m_s(c)$ is called {\it spacelike} (respectively, {\it timelike}, or {\it light-like}) if  $\<v,v\>>0$ (respectively, $\<v,v\><0$, or $\<v,v\>=0$ and $v\ne 0$).
A curve $z(x)$ in $R^m_s(c)$ is called spacelike (respectively, timelike or null) if its velocity vector $z'(x)$ is spacelike (respectively,  timelike or lightlike) at each point.

 \section{A special coordinate system on a Lorentz surface.} 
 
 Let $\M$ be a  Lorentz surface.  We may choose a local coordinate system $\{x,y\}$ on  $M^2_1$ such that the metric tensor is given by
\begin{align}\label{3.1} g=-E^2(x,y)(dx\otimes dy+dy\otimes dx)\end{align}
for some positive function $E$.

The Levi-Civita connection of $g$ satisfies
\begin{align}\label{3.2}\nabla_{\frac{\p}{\p x}}\frac{\p}{\p x}=\frac{2 E_x}{E}  \frac{\p}{\p x}, \; \nabla_{\frac{\p}{\p x}}\frac{\p}{\p y}=0, \;  \nabla_{\frac{\p}{\p y}}\frac{\p}{\p y}=\frac{2 E_y} {E} \frac{\p}{\p y}\end{align}
and the Gaussian curvature $K$  is given by
\begin{align}\label{3.3} K=\frac{2E E_{xy}-2E_x E_y}{E^4}.\end{align}

If we put \begin{align}\label{3.4} e_1=\frac{1}{E}\frac{\partial}{\partial x} ,\;\; e_2=\frac{1}{E}\frac{\partial}{\partial y},\end{align} then $\{e_1,e_2\}$ forms a pseudo-orthonormal frame satisfying 
\begin{align} \label{3.5} & \<e_1, e_1\>=\< e_2,e_2\>=0,\;\<e_1, e_2\>=-1.\end{align} 

We define the connection 1-form  $\omega$ by the following equations:
\begin{align}\label{3.6} &\nabla_Xe_1=\omega(X)e_1,\, \;\; \nabla_Xe_2=-\omega(X)e_2.\end{align}
 From \e{3.2} and \e{3.4} we find
\begin{equation}\begin{aligned}\label{3.7}&\nabla_{e_1}e_1=\frac{E_x}{E^2}e_1, \; \nabla_{e_2}e_1=-\frac{E_y}{E^2}e_1, \; \\& \nabla_{e_1}e_2=-\frac{E_x}{E^2}e_2, \;  \nabla_{e_2}e_2=\frac{E_y}{E^2}e_2 .\end{aligned}\end{equation}
By comparing \e{3.6} and \e{3.7}, we get
\begin{align}\label{3.8}  \o(e_1)=\frac{E_x}{E^2},\;\; \o(e_2)=-\frac{E_y}{E^2}.  \end{align} 

Let $\psi:\M\to R^m_s(c)$ be an isometric immersion of $\M$ into $R^m_s(c)$. Then
it follows from \e{2.5} and \e{3.5} that the mean curvature vector of $\M$ is given by
\begin{align}\label{3.9}  H=-h(e_1,e_2).\end{align} 
Therefore,  $\M$ is a minimal surface of $R^m_s(c)$ if and only if $h(e_1,e_2)=0$  holds identically.

\section{Minimal Lorentz surfaces in  $\mathbb E^m_s$.}

In this section, we completely classify minimal Lorentz surface  in an arbitrary pseudo-Euclidean $m$-space $\mathbb E^m_s$. More precisely, we prove the following.

\begin{theorem} \label{T:4.1} A Lorentz surface in  a pseudo-Euclidean m-space $\mathbb E^m_s$  is  minimal if and only if locally the immersion takes the form $L(x,y)=z(x)+w(y)$, where $z$ and  $w$ are null curves satisfying $\<z'(x),w'(y)\>\ne 0$.
\end{theorem}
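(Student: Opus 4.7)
The plan is to exploit the null coordinate system of Section~3 together with the flatness of the ambient space, reducing minimality to a simple PDE on the position vector.

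First, I would fix local null coordinates $\{x,y\}$ on $M^2_1$ so that the metric takes the form \eqref{3.1}. By \eqref{3.9}, minimality is equivalent to $h(e_1,e_2)=0$, and using \eqref{3.4} this is the same as $h\!\left(\tfrac{\p}{\p x},\tfrac{\p}{\p y}\right)=0$. Since the ambient space $\mathbb E^m_s$ is flat, $\tilde\nabla$ is just componentwise partial differentiation of the immersion $L(x,y)$, so from the Gauss formula \eqref{2.2} combined with $\nabla_{\p/\p x}\tfrac{\p}{\p y}=0$ (see \eqref{3.2}), one gets
\begin{equation*}
h\!\left(\tfrac{\p}{\p x},\tfrac{\p}{\p y}\right)=\tilde\nabla_{\p/\p x}\tfrac{\p L}{\p y}=\frac{\p^2 L}{\p x\,\p y}.
\end{equation*}
Thus $M^2_1$ is minimal in $\mathbb E^m_s$ if and only if $L_{xy}\equiv 0$, which integrates to $L(x,y)=z(x)+w(y)$ for some vector-valued functions $z,w$.

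Next I would identify which pairs $(z,w)$ actually correspond to a Lorentz surface with metric of the form \eqref{3.1}. Computing the induced metric from $L_x=z'(x)$ and $L_y=w'(y)$, the matrix of inner products is $\begin{pmatrix}\<z',z'\> & \<z',w'\>\\ \<z',w'\> & \<w',w'\>\end{pmatrix}$. Comparing with $g=-E^2(dx\otimes dy+dy\otimes dx)$ forces $\<z'(x),z'(x)\>=0$, $\<w'(y),w'(y)\>=0$, and $\<z'(x),w'(y)\>=-E^2(x,y)\neq 0$. So the two conditions---$z,w$ null and $\<z',w'\>\ne 0$---are exactly the nondegeneracy and Lorentzian-signature conditions that guarantee $L$ is an immersion of a Lorentz surface in a null-coordinate gauge.

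For the converse, given null curves $z,w$ with $\<z',w'\>\ne 0$, one checks directly that $L=z+w$ is an immersion whose induced metric is already in the null-coordinate form \eqref{3.1} (possibly after switching the roles of $x$ and $y$ to ensure the appropriate sign of $E^2$), and $L_{xy}=0$ gives $h(\p_x,\p_y)=0$, hence $H=0$ by \eqref{3.9}. The only real subtlety---and the place I would be most careful---is that the ``only if'' direction requires the null-coordinate gauge supplied by Section~3, so the statement ``locally the immersion takes the form $L=z(x)+w(y)$'' is implicitly in these coordinates; no genuine analytic obstacle arises because $L_{xy}=0$ integrates trivially.
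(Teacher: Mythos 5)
Your proposal is correct and follows essentially the same route as the paper: both pass to the null-coordinate gauge of Section~3, observe that minimality is equivalent to $L_{xy}=0$ via the Gauss formula and $\nabla_{\p/\p x}\frac{\p}{\p y}=0$, integrate to $L=z(x)+w(y)$, and read off the null conditions and $\<z',w'\>\ne 0$ from the induced metric. The sign caveat in your converse is unnecessary, since a metric of the form $F(dx\otimes dy+dy\otimes dx)$ with $F\ne 0$ is Lorentzian for either sign of $F$, which is exactly how the paper handles it.
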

\begin{proof} Let  $L:M^2_1\to \mathbb E^m_s$ be an isometric  immersion of a Lorentz surface $\M$ into a pseudo-Euclidean m-space $\mathbb E^m_s$ with index $s\geq 1$. 
We choose a local coordinate system $\{x,y\}$ on  $M^2_1$ satisfying 
\begin{align}\label{4.1} g=-E^2(x,y)(dx\otimes dy+dy\otimes dx)\end{align}
Then we have \e{3.2}-\e{3.9}.

If $\M$ is a minimal surface, it follows from \e{3.9} that  $h(e_1,e_2)=0$ holds. Hence, we may put \begin{align}\label{4.2} h(e_1,e_1)=\xi,\: \; h(e_1,e_2)=0,\; \:h(e_2,e_2)=\eta\end{align}
for some normal vector  fields $\xi,\eta$.  After applying \e{2.2},  \e{3.2}, \e{3.4}, and \e{4.2}, we obtain
\begin{equation}\begin{aligned}\label{4.3} & L_{xx}=\frac{2E_x}{E}L_x+  E^2 \xi,\;\; L_{xy}=0,\;\; L_{yy}=\frac{2E_y}{E}L_y+E^2 \eta.\end{aligned} \end{equation}
After solving the second equation in  \e{4.3}, we find
\begin{align} \label{4.4}  &L(x,y)=z(x)+w(y)\end{align}
for some  vector-valued functions $z(x),w(y)$.  Thus, by  applying \e{3.1} and \e{4.4}, we obtain $\<z',z'\>=\<w',w'\>=0,$ and $\<z',w'\>=-E^2.$ Therefore,  $z$ and $w$ are null curves satisfying $\<z',w'\>\ne 0$.

Conversely, if $L:\M\to \mathbb E^m_s$ is an immersion of a Lorentz surface $\M$ into $\mathbb E^4_s$ such that $L=z(x)+w(y)$ for some null curves $z,w$ satisfying $\<z',w'\>\ne 0$,
then we obtain $\<L_x,L_x\>=\<L_y,L_y\>=0, \<L_x,L_y\>\ne 0,$ and $L_{xy}=0.$ Thus, $\M$ is  surface with induced metric given by
$g=F(x,y)(dx\otimes dy+dy\otimes dx)$ for some nonzero function $F$.
 Moreover, it follows from \e{3.9} and $L_{xy}=0$ that $L$ is a minimal immersion.
\end{proof}

\begin{remark} Flat minimal Lorentz surfaces in the Lorentzian complex plane ${\bf C}^2_1$ have been completely classified in \cite{c4}. Moreover, if $m=3$, this theorem is due to \cite[Theorem 3.5]{Mc}.
\end{remark}

In particular, if $M^2_1$ is a flat Lorentz surface,  we have the following.

\begin{corollary} \label{C:4.1} A flat Lorentz surface in  a pseudo-Euclidean m-space $\mathbb E^m_s$  is  minimal if and only if locally the immersion takes the form \begin{align}\label{4.5}L(x,y)=z(x)+w(y),\end{align} where $z$ and  $w$ are null curves satisfying $\<z',w'\>=constant\ne 0$.
\end{corollary}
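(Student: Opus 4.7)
The plan is to bootstrap on Theorem \ref{T:4.1} and use the flatness hypothesis to rigidify the parametrization of the two null component curves.

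For the forward direction, suppose $M^2_1$ is flat and minimal. The proof of Theorem \ref{T:4.1} provides null curves $z,w$ with $L(x,y)=z(x)+w(y)$ and $\<z'(x),w'(y)\>=-E^2$ nonzero, in a coordinate system for which the metric takes the form \e{3.1}. Imposing $K\equiv 0$ in \e{3.3} gives $EE_{xy}=E_xE_y$, equivalently $(\log E)_{xy}\equiv 0$; a single integration yields $E^2(x,y)=A(x)B(y)$ for smooth positive one-variable functions $A,B$. In other words, the inner product $\<z'(x),w'(y)\>$ already \emph{factors} as a product of a function of $x$ alone and a function of $y$ alone, and the remaining task is only to rescale each coordinate separately to make each factor constant.

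To this end I reparametrize $z$ and $w$ by new variables $\tilde x(x),\tilde y(y)$ chosen so that $d\tilde x/dx=A(x)$ and $d\tilde y/dy=B(y)$. Any such reparametrization scales $z'$ by a scalar function of $x$ alone, so nullness of $z$ (and likewise $w$) is preserved; a direct chain-rule computation then gives $\<\tilde z'(\tilde x),\tilde w'(\tilde y)\>=-1$, a nonzero constant, and the decomposition $L=\tilde z+\tilde w$ persists.

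For the converse, if $L=z(x)+w(y)$ with $z,w$ null and $\<z',w'\>=c\ne 0$ constant, minimality follows directly from Theorem \ref{T:4.1}; flatness is then immediate because the coefficient of the metric \e{3.1} is constant, so $E_x=E_y=0$ and \e{3.3} yields $K\equiv 0$. The only mildly technical step in the whole argument is the coordinate change, but it is a routine consequence of the product structure already forced by $(\log E)_{xy}=0$, so no substantive obstacle remains.
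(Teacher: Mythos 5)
Your argument is correct, but it reaches the normalization by a different route than the paper. The paper's proof simply invokes, at the outset, the existence of local null coordinates on a \emph{flat} Lorentz surface in which the metric is exactly $-(dx\otimes dy+dy\otimes dx)$, i.e.\ $E\equiv 1$; with that normalization the system \eqref{4.3} collapses to $L_{xx}=\xi$, $L_{xy}=0$, $L_{yy}=\eta$, and the constancy $\<z',w'\>=-1$ drops out immediately, with no reparametrization needed. You instead start from Theorem \ref{T:4.1} in arbitrary null coordinates, impose $K\equiv 0$ in \eqref{3.3} to get $(\log E)_{xy}=0$, factor $E^2=A(x)B(y)$, and then rescale each variable separately so that $\<\tilde z',\tilde w'\>=-1$; the chain-rule computation is right, nullness is preserved under scalar rescaling of each component curve, and the decomposition survives. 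In effect you have supplied the proof of the coordinate normalization that the paper takes for granted, so your version is more self-contained, at the cost of being slightly longer; the two arguments are otherwise the same, and your converse direction (minimality from Theorem \ref{T:4.1}, flatness from $E$ constant in \eqref{3.3}) matches the paper's. One cosmetic point: when $\<z',w'\>=c$ is a positive constant the induced metric is $c(dx\otimes dy+dy\otimes dx)$ rather than of the form \eqref{3.1} with $E$ real, so strictly one should negate one coordinate to match the stated normal form; the paper glosses over this as well, so it is not a gap relative to the source.
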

\begin{proof} Let  $L:M^2_1\to \mathbb E^m_s$ be an isometric  immersion of a flat Lorentz surface $M^2_1$ into a pseudo-Euclidean m-space $\mathbb E^m_s$.  Then  we may choose a local coordinate system $\{x,y\}$ on  $M^2_1$ satisfying 
\begin{align}\label{4.6} g=-(dx\otimes dy+dy\otimes dx)\end{align}
Then we find from \e{4.3} that the immersion $L$ satisfies
\begin{align}\label{4.7} & L_{xx}= \xi,\;\; L_{xy}=0,\;\; L_{yy}= \eta\end{align}
for some normal vector fields $\xi,\eta$. After solving the second equation in \e{4.7} we find
\begin{align} \label{4.8}  &L(x,y)=z(x)+w(y)\end{align}
for some vector functions $z,w$. Thus, by applying \e{4.6}, we find $\<z',z'\>=\<w',w'\>=0$ and $\<z',w'\>=-1$. Consequently, $z$ and  $w$ are null curves satisfying $\<z',w'\>=constant \ne 0$.

Conversely,  consider a map $L$ defined by $L(x,y)=z(x)+w(y)$, where  $z$ and  $w$ are null curves satisfying $\<z',w'\>=constant \ne 0$ .
Then we have $$\<L_x,L_x\>=\<L_y,L_y\>=0,\;\; \<L_x,L_y\>=constant \ne 0.$$ Thus, with respect to the induced metric, \e{4.5} defines an isometric immersion of a flat Lorentz surface $M^2_1$ into $\mathbb E^m_s$. The remaining follows from Theorem \ref{T:4.1}.
\end{proof}

\section{Minimal Lorentz surfaces in  $S^m_s(1)$.}

Let $\psi:\M\to S^m_s(1)$ be an isometric immersion of a Lorentz surface  into $S^m_s(1)$. Denote by $L=\iota\circ \psi :\M\to \mathbb E^{m+1}_{s}$ the composition  of $\psi$ and the inclusion $\iota:S^m_s(1)\subset \mathbb E^{m+1}_{s}$ via \e{1.2}.

Obviously, every totally geodesic Lorentz surface in an indefinite space form $R^m_s(c)$ is of constant curvature $c$. A natural question is the following:

\begin{question} Besides totally geodesic ones how many minimal Lorentz surfaces  of constant curvature $c$ in $R^m_s(c)$ are there \hskip-.02in ?\end{question}
\vskip.05in 

Theorem 5.1 of \cite{c4} provides the answer to this basic question for $c=0$.

In this section, we give an answer to this question for $c>0$. More precisely, we 
classify all minimal Lorentz surfaces of constant curvature one in the pseudo-sphere $S^m_s(1)$ with arbitrary $m$ and arbitrary index $s$.

\begin{theorem} \label{T:5.1} Let $\M$ be a Lorentz surface of constant curvature one. Then an isometric immersion
$\psi:\M\to S^m_s(1)$ is  minimal if and only if  one of the following three cases occurs:

\vskip.04in

\noindent {\rm (a)}  $\M$ is an open portion of a totally geodesic $S^2_1(1)\subset S^m_s(1)$.
\vskip.04in

\noindent {\rm (b)} The immersion $L=\iota\circ \psi :\M\to S^m_s(1)\subset \mathbb E^{m+1}_s$ is locally given by
\begin{align}\label{5.1} & L(x,y)=\frac{z(x)}{x+y}-\frac{z'(x)}{2},\end{align}
where  $z(x)$ is a spacelike curve with constant speed $2$ lying in the light cone $\mathcal LC$ satisfying $\<z'',z''\>=0$ and $z'''\ne 0$.

\vskip.04in

\noindent {\rm (c)}  The immersion $L=\iota\circ \psi :\M\to S^m_s(1)\subset \mathbb E^{m+1}_s$ is locally given by
  \begin{align}\label{5.2} & L(x,y)=\frac{z(x)+w(y)}{x+y}-\frac{z'(x)+w'(y)}{2},\end{align}
where  $z$ and $w$ are curves in $\mathbb E^{m+1}_s$ satisfying \begin{align} &\tag{c.1} 
\Big<\text{\small$ \frac{z(x)+w(y)}{x+y}-\frac{z'(x)+w'(y)}{2},\frac{z(x)+w(y)}{x+y}-\frac{z'(x)+w'(y)}{2}$}\Big>=1,
\\&\tag{c.2}2 \<z+w,z'''\>=(x+y)\<z'+w',z'''\>,
\\&\tag{c.3} 2\<z+w,w'''\> =(x+y)\<z'+w',w'''\>.
\end{align} 
 \end{theorem}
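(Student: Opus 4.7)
The plan is to reduce the classification to a single linear hyperbolic PDE for the ambient-space immersion $L = \iota\circ\psi:\M\to\mathbb E^{m+1}_s$, parametrize its solutions, and then extract the three cases.

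Starting from the null coordinates of Section 3, $g = -E^2(dx\otimes dy + dy\otimes dx)$, the hypothesis $K = 1$ combined with \eqref{3.3} gives the Liouville equation $(\ln E)_{xy} = E^2/2$. Its general local solution is $E^2 = 2\alpha'(x)\beta'(y)/(\alpha(x)+\beta(y))^2$, so reparametrizing $x\mapsto \alpha(x)$, $y\mapsto \beta(y)$ (which preserves the null form of the metric) normalizes $E = \sqrt 2/(x+y)$. Since $\langle L, L\rangle = 1$ and $\Delta L = -2 L_{xy}/E^2$ in these coordinates, minimality in $S^m_s(1)$ (equivalent to $\Delta L = -2L$) becomes
\[
L_{xy} = \frac{2L}{(x+y)^2}.
\]

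Direct substitution verifies that $L(x,y) = (z(x)+w(y))/(x+y) - (z'(x)+w'(y))/2$ solves this PDE for any $\mathbb E^{m+1}_s$-valued functions $z, w$; since the ansatz involves two arbitrary vector functions of a single characteristic coordinate, it exhausts the solutions of this linear hyperbolic PDE. To impose the intrinsic constraints, note that $\langle L, L\rangle = 1$ is exactly (c.1); differentiating (c.1) in $x$ gives $\langle L, L_x\rangle = 0$, and differentiating again in $y$ and using the PDE yields $\langle L_x, L_y\rangle = -2/(x+y)^2$ automatically, matching the off-diagonal metric. The remaining null conditions $\langle L_x, L_x\rangle = \langle L_y, L_y\rangle = 0$ reduce, via the identity $2(x+y)L = 2(z+w) - (x+y)(z'+w')$, to $\langle L, z'''\rangle = 0$ and $\langle L, w'''\rangle = 0$, which are precisely conditions (c.2) and (c.3). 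This establishes the generic Case (c).

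The remaining two cases arise by degeneration. Case (a) corresponds to $z''' \equiv w''' \equiv 0$, under which $L$ lies in an affine 3-plane of $\mathbb E^{m+1}_s$ whose intersection with $S^m_s(1)$ is the totally geodesic $S^2_1(1)$. Case (b) arises when one generator is absent (say $w \equiv 0$); then (c.1) becomes a polynomial identity in $1/(x+y)$ whose coefficients force $\langle z, z\rangle = 0$ (so $z$ lies in the light cone) and $\langle z', z'\rangle = 4$, while (c.2) reduces to $\langle z', z'''\rangle = 0$, which equals $\langle z'', z''\rangle = 0$ after differentiating the speed relation twice. The converse in each case is a routine verification: substitute into $L_{xy} = 2L/(x+y)^2$ to confirm minimality, then verify $L \in S^m_s(1)$ with the prescribed induced metric.

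The main technical obstacle is the clean separation of the three cases. In particular, Case (b) must be shown not to be merely Case (c) with $w \equiv 0$ but the over-determined subfamily forced by the extra algebraic relations $\langle z, z\rangle = 0$, $\langle z', z'\rangle = 4$, $\langle z'', z''\rangle = 0$ when one generator is dropped. Secondary care is needed to justify the coordinate normalization $E = \sqrt 2/(x+y)$ using the full reparametrization freedom, and to confirm that the two-function ansatz genuinely exhausts the solutions of the linear PDE without missing any homogeneous family.
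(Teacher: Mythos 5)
Your proposal is essentially a valid reorganization of the paper's argument rather than a different proof: the coordinate normalization $E=\sqrt2/(x+y)$, the explicit formula \eqref{5.2}, the identification of (c.1)--(c.3) with $\<L,L\>=1$, $\<L_x,L_x\>=0$, $\<L_y,L_y\>=0$, and the converse verification all coincide with what the paper does. The organizational difference is that the paper first splits into the cases $h(e_2,e_2)=0$ and $h(e_2,e_2)\ne 0$, determines the normal components from the compatibility conditions as $\xi=(x+y)^2A(x)$, $\eta=(x+y)^2B(y)$, and then integrates the full Gauss system \eqref{5.15}; you instead keep only the single equation $L_{xy}=2L/(x+y)^2$ and recover (a) and (b) afterwards as degenerations of (c). One small correction there: case (b) is characterized by $w'''\equiv 0$ (equivalently $h(e_2,e_2)=0$), not by $w\equiv 0$; a quadratic $w$ can then be absorbed into $z$ using the redundancy of the representation, so your reading is recoverable but should be stated that way.

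The one step you do not actually prove is the assertion that the two-function ansatz exhausts the solutions of $L_{xy}=2L/(x+y)^2$: ``two arbitrary vector functions of a single characteristic coordinate'' is a parameter count, not an argument, and this claim carries the entire forward direction of your proof. The claim is true, and the fix is short: the PDE alone gives $\partial_y\bigl(L_{xx}+\tfrac{2}{x+y}L_x\bigr)=L_{xxy}-\tfrac{2}{(x+y)^2}L_x+\tfrac{2}{x+y}L_{xy}=0$, so $L_{xx}+\tfrac{2}{x+y}L_x=A(x)$ for some $A$, and symmetrically $L_{yy}+\tfrac{2}{x+y}L_y=B(y)$; integrating these ODEs and substituting back into $L_{xy}=2L/(x+y)^2$ yields exactly \eqref{5.16} and \eqref{5.17}. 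This reproduces the system \eqref{5.15} that the paper obtains from the Codazzi conditions, so once you add this computation the two proofs coincide. Without it, the forward direction has a genuine gap.
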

\begin{proof}  Assume that $\psi: \M\to  S^m_s(1)$ is an isometric immersion of a Lorentz surface $\M$ of constant curvature one into  $S^m_s(1)$. 
If $\M$ is totally geodesic in $S^m_s(1)$,  we obtain case (a). Hence, let us assume that $\M$ is non-totally geodesic in $S^m_s(1)$.  

Since $\M$ is of constant curvature one, we may choose local coordinates $\{x,y\}$ such that the metric tensor is given by
\begin{align}\label{5.3}& g=\frac{-2}{(x+y)^2}(dx\otimes dy+dy\otimes dx).\end{align}
Hence the Levi-Civita connection satisfies
\begin{align}\label{5.4}& \nabla_{\x}\x=\frac{-2}{x+y}\x,\;\; \nabla_{\x}\y=0,\;\; \nabla_{\y}\y=\frac{-2}{x+y}\y.\end{align}

Let us put 
\begin{align}\label{5.5}& \x=\frac{\sqrt{2}e_1}{x+y},\;\; \y=\frac{\sqrt{2}e_2}{x+y}.\end{align} Then we get 
\begin{align}\label{5.6}& \<e_1,e_1\>=\<e_2,e_2\>=0,\;\; \<e_1,e_2\>=-1.\end{align} 
Because $\M$ is  minimal in $S^m_s(1)$,  it follows from \e{3.9} and \e{5.3} that  $h(e_1,e_2)=0$. Hence, we may put
\begin{align}\label{5.7} h(e_1,e_1)=\xi,\; h(e_1,e_2)=0,\; h(e_2,e_2)=\eta \end{align}
for some normal vector fields $\xi,\eta$. Without loss of generality, we may assume $\xi\ne 0$.  Since $K=1$, it follows from the equation \e{2.6} of Gauss and \e{5.3} that $\<\xi,\eta\>=0$.

\vskip.04in
\noindent {\it Case} (i):  $\eta=0$. From formula \e{2.2} of Gauss, \e{5.3}-\e{5.5}, and \e{5.7}, we get
\begin{equation}\begin{aligned}\label{5.8}&L_{xx}=\frac{2\xi }{(x+y)^2}-\frac{2L_x}{x+y} ,\;\; L_{xy}=\frac{2 L}{(x+y)^2},\;\;  L_{yy}=-\frac{2 L_y}{x+y} 
.\end{aligned} \end{equation}
After solving the last two equations in \e{5.8} we obtain
\begin{align}\label{5.9}& L(x,y)=\frac{z(x)}{x+y}-\frac{z'(x)}{2}\end{align}
for some $\mathbb E^{m+1}_s$-valued function $z(x)$. Since the metric tensor is given by \eqref{5.3}, one finds
\begin{align}\notag \<L_x,L_x\>=\<L_y,L_y\>=0\;\; {\rm and}\;\; \<L_x,L_y\>=-\frac{2}{(x+y)^2}.\end{align}
Thus, it follows from \e{5.8}, \e{5.9} and $\<L,L\>=1$ that $z(x)$ satisfies $$\<z,z\>=\<z'',z''\>=0 \;\; {\rm and}\;\; \<z',z'\>=4.$$ Moreover,   by substituting \e{5.9} into  the first equation \e{5.8} we find \begin{align} \label{5.10}\xi=-\frac{(x+y)^2 z'''(x)}{4}.\end{align} 
Combining this with $\xi\ne 0$ gives $z'''(x)\ne 0$.
Consequently, we obtain case (b).

Conversely, suppose that $L$ is given by \e{5.1}, where  $z(x)$ is a spacelike curve with constant speed $2$ lying in the light cone $\mathcal LC$ $\subset \mathbb E^{m+1}_{s}$ satisfying $\<z'',z''\>=0$ and $z'''\ne 0$.  Then $L$ satisfies \e{5.9} with $\xi$ given by \e{5.10}. From the assumption, we have
\begin{align} \label{5.11} \<z,z\>=\<z,z'\>=\<z'',z''\>=0,\;\; \<z',z'\>=-\<z,z''\>=4 .\end{align}
By using \e{5.9} and \e{5.11} we know that  $\<L,L\>=1$ and the induced metric tensor is given by \e{5.3}. Moreover, the second equation in \e{5.8} shows that the second fundamental form of $\psi$ satisfies $h(\x,\y)=0$. Consequently, the immersion $\psi$ is a minimal immersion.

\vskip.04in
\noindent {\it Case} (ii):  $\eta\ne 0$. After applying formula \e{2.2} of Gauss, \e{5.3}-\e{5.5}, and \e{5.7}, we obtain
\begin{equation}\begin{aligned}\label{5.12}&L_{xx}=\frac{2\xi }{(x+y)^2}-\frac{2L_x}{x+y} ,\;\; L_{xy}=\frac{2 L}{(x+y)^2},\;\;  L_{yy}=\frac{2\eta}{(x+y)^2}-\frac{2 L_y}{x+y} .\end{aligned} \end{equation}
The compatibility conditions of \e{5.12} are given by 
\begin{align}\label{5.13}&\tilde\nabla_\y \xi=\frac{2\xi}{x+y},\;\; \tilde \nabla_\x \eta=\frac{2\eta}{x+y}.\end{align}
Solving \e{5.13} gives \begin{align}\label{5.14} \xi=(x+y)^2 A(x),\,\;\;  \eta=(x+y)^2 B(y)\end{align} for some $\mathbb E^{m+1}_s$-valued functions $A(x),B(y)$. 
Substituting \e{5.14} into \e{5.12} yields
\begin{equation}\begin{aligned}\label{5.15}&L_{xx}=A(x)-\frac{2L_x}{x+y} ,\;\; L_{xy}=\frac{2 L}{(x+y)^2},\;\;  L_{yy}=B(y)-\frac{2 L_y}{x+y} .\end{aligned} \end{equation}
After solving system \e{5.15}, we obtain 
\begin{align}\label{5.16}&L(x,y)=\frac{z(x)+w(y)}{x+y} -\frac{z'(x)+w'(y)}{2}, \end{align}
where $z(x),w(y)$ are $\mathbb E^{m+1}_s$-valued functions  satisfying
\begin{align}&\label{5.17} z'''(x)=-4 A(x),\;\; w'''(y)=-4 B(y). \end{align}
From $\<L,L\>=1$ and  \e{5.16}, we obtain condition (c.1) in Theorem \ref{T:5.1}.

By combining \e{5.15} and \e{5.17}, we obtain
\begin{equation}\begin{aligned}\label{5.18}&L_{xx}=-\frac{z'''}{4}-\frac{2L_x}{x+y} ,\;\; L_{xy}=\frac{2 L}{(x+y)^2},\;\;  L_{yy}=-\frac{w'''(y)}{4}-\frac{2 L_y}{x+y} .\end{aligned} \end{equation}
Since the metric tensor of $\M$ is given by \e{5.3}, we find
\begin{align}\label{5.19}&\<L_x,L_x\>=\<L_y,L_y\>=0, \;\;\<L_x,L_y\>=-\frac{2}{(x+y)^2}. \end{align}

Because $\<L,L\>=1$, we have $\<L_{xx},L\>=-\<L_x,L_x\>=0$. Thus, we obtain condition (c.2) from \e{5.16}, \e{5.18} and \e{5.19}

Similarly, due to $\<L_{yy},L\>=-\<L_y,L_y\>=0$, we may also derive condition (c.3) from \e{5.16} and \e{5.18}.

Conversely, assume that $L$ is defined by \begin{align}\label{5.20}&L(x,y)=\frac{z(x)+w(y)}{x+y} -\frac{z'(x)+w'(y)}{2}, \end{align}
 where $z(x),w(y)$ are curves satisfying conditions (c1), (c.2) and (c.3). 
Then it follows from \e{5.20} that $L$ satisfies system \e{5.18}. Also, it follows from \e{5.20} and condition (c.1) that  $\<L,L\>=1$. Thus, we have
\begin{align}\label{5.21}&\<L,L_x\>=\<L,L_y\>=0,\end{align}
which implies that
\begin{align}\label{5.22}&\<L_x,L_x\>=-\<L,L_{xx}\>,\; \<L_x,L_y\>=-\<L,L_{xy}\>,\; \<L_y,L_y\>=-\<L,L_{yy}\>.\end{align}
By applying \e{5.22}, (c.1) and the first equation in \e{5.22}, we obtain 
\begin{align}\label{5.23}&\<L_x,L_x\>=-\<L,L_{xx}\>=\frac{2}{(x+y)^2}\<L_x,L_x\>,\end{align} which shows that $\<L_x,L_x\>=0$. 

Similarly, from \e{5.22} and (c.3) we find $\<L_y,L_y\>=0$.
Also, after applying (c.1), \e{5.22} and the second equation in \e{5.18}, we find $\<L_x,L_y\>=-2/(x+y)^2$. Consequently, the induced metric tensor via $L$ is given by \e{5.3}. Finally, it follows from \e{3.9} and the second equation in \e{5.18} that  $\psi:\M\to S^m_s(1)$ is a minimal immersion.
\end{proof}

\section{Minimal Lorentz surfaces in  $H^m_s(-1)$.}

Let $\psi:\M\to H^m_s(-1)$ be an isometric immersion of a Lorentz surface  into $H^m_s(-1)$. Denote by $L=\iota\circ \psi :\M\to \mathbb E^{m+1}_{s+1}$ the composition  of $\psi$ and the inclusion $\iota:H^m_s(-1)\subset \mathbb E^{m+1}_{s+1}$ via \e{1.2}.  

In this section, we provide the following answer to the basic question proposed in section 5 for $c<0$.

\begin{theorem} \label{T:6.1} Let $\M$ be a Lorentz surface of constant Gauss curvature $-1$. Then an isometric immersion
$\psi:\M\to H^m_s(-1)$ is a minimal immersion if and only if  one of the following three cases occurs:
\vskip.04in

\noindent {\rm (i)} $\M$ is an open portion of a totally geodesic $H^2_1(-1)\subset H^m_s(-1)$.
\vskip.04in

\noindent {\rm (ii)} The immersion $L=\iota\circ \psi :\M\to H^m_s(-1)\subset \mathbb E^{m+1}_{s+1}$ is locally given by
\begin{align}\label{6.1} & L(x,y)=z(x)\tanh\!\Big(\text{\small$\frac{x+y}{\sqrt{2}}$}\Big)-\frac{z'(x)}{\sqrt{2}},\end{align}
where  $z(x)$ is a timelike curve with constant speed $\sqrt{2}$ lying in the light cone $\mathcal LC$ $\subset \mathbb E^{m+1}_{s+1}$ satisfying $\<z'',z''\>=4$ and $z'''\ne 2z'$.

\vskip.04in

\noindent {\rm (iii)}  The immersion $L=\iota\circ \psi :\M\to H^m_s(-1)\subset \mathbb E^{m+1}_{s+1}$ is locally given by
  \begin{align}\label{6.2} & L(x,y)=(z(x)+w(y))\tanh\Big(\!\text{\Small$\frac{x+y}{\sqrt{2}}$}\!\Big)-\frac{z'(x)\!+\!w'(y)}{\sqrt{2}},\end{align}
where  $z$ and $w$ are curves satisfying \begin{align} &\tag{iii.1} 
\Big< (z+w)\tanh\Big(\!\text{\Small$\frac{x+y}{\sqrt{2}}$}\!\Big)-\text{\Small$\frac{z'+w'}{\sqrt{2}}$},(z+w)\tanh\Big(\!\text{\Small$\frac{x+y}{\sqrt{2}}$}\!\Big)-\text{\Small$\frac{z'+w'}{\sqrt{2}}$}\Big>=-1, 
\\&\tag{iii.2}\sqrt{2}\<z+w,2 z'-z'''\>\tanh\!\Big(\text{\Small$\frac{x+y}{\sqrt{2}}$}\Big) =\<z'+w',2z'-z'''\>,
\\&\tag{iii.3}\sqrt{2}\<z+w,2 w'-w'''\>\tanh\!\Big(\text{\Small$\frac{x+y}{\sqrt{2}}$}\Big) =\<z'+w',2w'-w'''\>.
\end{align} 

 \end{theorem}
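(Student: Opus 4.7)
The plan is to run the direct analogue of the proof of Theorem~\ref{T:5.1}, with two key substitutions: the ambient index shifts from $s$ to $s+1$ because $H^m_s(-1)\subset \mathbb E^{m+1}_{s+1}$, and the ``conformal factor'' $1/(x+y)$ that underlies the spherical case is replaced by $\tanh((x+y)/\sqrt{2})$. First I dispose of the totally geodesic subcase, which yields case (i). Assuming $\psi$ is non-totally-geodesic and using $K=-1$, I choose local null coordinates $\{x,y\}$ in which
\[ g=-\sech^2\!\Big(\text{\small$\frac{x+y}{\sqrt{2}}$}\Big)(dx\otimes dy+dy\otimes dx); \]
a direct application of \eqref{3.3} verifies that this metric has Gaussian curvature $-1$. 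Introducing the frame $\{e_1,e_2\}$ as in \eqref{3.4}--\eqref{3.5}, minimality plus \eqref{3.9} gives $h(e_1,e_2)=0$, so I set $h(e_1,e_1)=\xi$ and $h(e_2,e_2)=\eta$. The Gauss equation \eqref{2.6} with $c=K=-1$ forces $\<\xi,\eta\>=0$. Since the surface is non-totally-geodesic we may assume $\xi\ne 0$, and the proof then splits according to whether $\eta=0$ (leading to case (ii)) or $\eta\ne 0$ (leading to case (iii)).

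Next, I translate the Gauss formula \eqref{2.2} into a PDE system for the position vector $L=\iota\circ\psi:\M\to \mathbb E^{m+1}_{s+1}$. Since $\<L,L\>=-1$, the inclusion $H^m_s(-1)\subset\mathbb E^{m+1}_{s+1}$ contributes an extra $+\<X,Y\>L$ term to the Gauss formula, and with $2E_x/E=2E_y/E=-\sqrt{2}\tanh((x+y)/\sqrt{2})$ for the chosen $E$, the system becomes
\begin{align*}
L_{xx} &= -\sqrt{2}\tanh\!\Big(\text{\small$\frac{x+y}{\sqrt{2}}$}\Big)L_x + \sech^2\!\Big(\text{\small$\frac{x+y}{\sqrt{2}}$}\Big)\,\xi,\\
L_{xy} &= -\sech^2\!\Big(\text{\small$\frac{x+y}{\sqrt{2}}$}\Big)\,L,\\
L_{yy} &= -\sqrt{2}\tanh\!\Big(\text{\small$\frac{x+y}{\sqrt{2}}$}\Big)L_y + \sech^2\!\Big(\text{\small$\frac{x+y}{\sqrt{2}}$}\Big)\,\eta.
\end{align*}
In the subcase $\eta=0$ the $L_{yy}$ equation is a linear ODE in $y$ whose general solution has $L_y$ proportional to $\sech^2$; integrating and substituting into $L_{xy}=-\sech^2 L$ produces exactly \eqref{6.1}, with no room for a $y$-only term. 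In the subcase $\eta\ne 0$ the same pair of equations admits the two-function ansatz \eqref{6.2}, which a direct substitution using $\partial_x\tanh=\sech^2/\sqrt{2}$, $\partial_x\sech^2=-\sqrt{2}\,\sech^2\tanh$ and $\tanh^2+\sech^2=1$ verifies.

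Finally, I extract the scalar constraints. Differentiating $\<L,L\>=-1$ gives $\<L,L_x\>=\<L,L_y\>=0$, hence $\<L_x,L_x\>=-\<L,L_{xx}\>$ and $\<L_y,L_y\>=-\<L,L_{yy}\>$. A short direct computation on \eqref{6.2} yields the identities
\[ L_{xx}+\sqrt{2}\tanh\!\Big(\text{\small$\frac{x+y}{\sqrt{2}}$}\Big)L_x=\frac{2z'-z'''}{\sqrt{2}},\qquad L_{yy}+\sqrt{2}\tanh\!\Big(\text{\small$\frac{x+y}{\sqrt{2}}$}\Big)L_y=\frac{2w'-w'''}{\sqrt{2}}, \]
so demanding $\<L_x,L_x\>=\<L_y,L_y\>=0$ (as required by the chosen metric) is equivalent to conditions (iii.2) and (iii.3); (iii.1) is $\<L,L\>=-1$ itself. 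In case (ii) the same reduction forces $\<z,z\>=\<z,z'\>=0$, $\<z',z'\>=-2$ (so $z$ is timelike of constant speed $\sqrt{2}$ on the light cone) and $\<z'',z''\>=4$, while $z'''\ne 2z'$ corresponds to $\xi\ne 0$ via $\sech^2 u\,\xi=(2z'-z''')/\sqrt{2}$. Each converse direction is then a straightforward verification: check $\<L,L\>=-1$ via (iii.1), compute the induced metric, and confirm $L_{xy}$ is a scalar multiple of $L$, giving $h(\partial_x,\partial_y)=0$ and hence minimality by \eqref{3.9}. The main obstacle I anticipate is purely algebraic bookkeeping: the hyperbolic identities in the coefficients, the propagation of factors of $\sqrt{2}$ through the derivations of $\xi$, $\eta$ and (iii.2)--(iii.3), and the compatibility of the three equations above are considerably more delicate than the rational $1/(x+y)$ computations of the spherical case.
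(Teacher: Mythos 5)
Your overall strategy coincides with the paper's: same choice of null coordinates with metric $-\sech^2\big(\tfrac{x+y}{\sqrt{2}}\big)(dx\otimes dy+dy\otimes dx)$, same pseudo-orthonormal frame, the same reduction to $h(e_1,e_2)=0$ with $\<\xi,\eta\>=0$ from the Gauss equation, the same PDE system for $L$, the same case split on $\eta=0$ versus $\eta\ne 0$, and the same extraction of (iii.1)--(iii.3) from $\<L,L\>=-1$ and $\<L,L_{xx}\>=-\<L_x,L_x\>$. Your treatment of case (ii) and of all the converse directions is a faithful match.

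There is, however, one genuine gap, in the forward direction of case (iii). When $\eta\ne 0$ you write that ``the same pair of equations admits the two-function ansatz \eqref{6.2}, which a direct substitution \dots verifies.'' Verifying that \eqref{6.2} \emph{satisfies} the system is only the sufficiency half; the classification requires showing that \emph{every} solution of the system has the form \eqref{6.2}. Moreover, unlike in case (ii), the $L_{yy}$ equation now contains the unknown normal field $\eta$ (and $L_{xx}$ contains $\xi$), so you cannot integrate it as a linear ODE in $y$ until you know how $\eta$ depends on $x$ (and $\xi$ on $y$). The paper closes exactly this hole before integrating: it writes down the compatibility (integrability) conditions of the system \eqref{6.12}, namely $\tilde\nabla_{\p/\p y}\,\xi=\sqrt{2}\,\xi\tanh\big(\tfrac{x+y}{\sqrt{2}}\big)$ and $\tilde\nabla_{\p/\p x}\,\eta=\sqrt{2}\,\eta\tanh\big(\tfrac{x+y}{\sqrt{2}}\big)$, solves them to get $\xi=A(x)\cosh^2\big(\tfrac{x+y}{\sqrt{2}}\big)$ and $\eta=B(y)\cosh^2\big(\tfrac{x+y}{\sqrt{2}}\big)$, and only then integrates the resulting system \eqref{6.14} to obtain \eqref{6.15} with $A=\sqrt{2}z'-z'''/\sqrt{2}$, $B=\sqrt{2}w'-w'''/\sqrt{2}$. (An alternative repair would be to prove that \eqref{6.2} is the \emph{general} solution of the single equation $L_{xy}=-\sech^2\big(\tfrac{x+y}{\sqrt{2}}\big)L$ by a Goursat-type argument along the characteristics $x=\mathrm{const}$, $y=\mathrm{const}$, but that too needs to be argued, not asserted.) With that step supplied, your proof is the paper's proof.
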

 
\begin{proof}  Assume that $\psi: \M\to  H^m_s(-1)$ is an isometric immersion of a Lorentz surface $\M$ of constant curvature $-1$ into  $H^m_s(-1)$. 
If $M$ is totally geodesic in $H^m_s(-1)$,  we obtain (i). Hence, let us assume that $\M$ is non-totally geodesic.  

Since $\M$ is assumed to be of constant curvature $-1$, we may choose local coordinates $\{x,y\}$ such that the metric tensor is given by
\begin{align}\label{6.3}& g=-\sech^2\Big(\text{\small$\frac{x+y}{\sqrt{2}}$}\Big)(dx\otimes dy+dy\otimes dx).\end{align}
Hence, the Levi-Civita connection satisfies
\begin{equation}\begin{aligned}\label{6.4}& \nabla_{\x}\x=-\sqrt{2}\tanh \Big(\text{\small$\frac{x+y}{\sqrt{2}}$}\Big) \x,\;\; \\&\nabla_{\x}\y=0,\;\; \\&\nabla_{\y}\y=-\sqrt{2}\tanh \Big(\text{\small$\frac{x+y}{\sqrt{2}}$}\Big)\y.\end{aligned} \end{equation}

Let us put 
\begin{align}\label{6.5}& \x=\sech\Big(\text{\small$\frac{x+y}{\sqrt{2}}$}\Big) e_1 ,\;\; \y=\sech\Big(\text{\small$\frac{x+y}{\sqrt{2}}$}\Big) e_2 .\end{align}
Then we get 
\begin{align}\label{6.6}& \<e_1,e_1\>=\<e_2,e_2\>=0,\;\; \<e_1,e_2\>=-1.\end{align} 
Because $\M$ is  minimal,  it follows from \e{3.9} and \e{6.3} that  $h(e_1,e_2)=0$ holds. Hence, we may put
\begin{align}\label{6.7} h(e_1,e_1)=\xi,\; h(e_1,e_2)=0,\; h(e_2,e_2)=\eta \end{align}
for some normal vector fields $\xi,\eta$. Without loss of generality, we may assume $\xi\ne 0$.  Since $\M$ is of curvature $-1$, the equation of Gauss and \e{6.7} imply that $\<\xi,\eta\>=0$.

\vskip.04in
\noindent {\it Case} (i):  $\eta=0$. By applying formula \e{2.2} of Gauss, \e{6.3}-\e{6.5}, and \e{6.7}, we obtain
\begin{equation}\begin{aligned}\label{6.8}&L_{xx}=\sech^2\Big(\text{\small$\frac{x+y}{\sqrt{2}}$}\Big)\xi-\sqrt{2}\tanh \Big(\text{\small$\frac{x+y}{\sqrt{2}}$}\Big)L_x ,\;\;\\& L_{xy}=-\sech^2\Big(\text{\small$\frac{x+y}{\sqrt{2}}$}\Big)L,\;\;  \\&L_{yy}= -\sqrt{2}\tanh \Big(\text{\small$\frac{x+y}{\sqrt{2}}$}\Big)L_y 
.\end{aligned} \end{equation}
After solving the last two equations in \e{6.8} we have
\begin{align}\label{6.9}& L(x,y)=z(x)\tanh\Big(\text{\small$\frac{x+y}{\sqrt{2}}$}\Big)-\frac{z'(x)}{\sqrt{2}}\end{align}
for some $\mathbb E^{m+1}_{s+1}$-valued function $z$. It follows from \e{6.3}, \e{6.8}, \e{6.9} and $\<L,L\>=-1$ that $z(x)$ satisfies $\<z,z\>=0,\,\<z',z'\>=-2$, and $\<z'',z''\>=4$. Moreover,   substituting \e{6.9} into  the first equation \e{6.8} yields \begin{align} \label{6.10} \xi=\(\sqrt{2}z'(x)-\text{\small$\frac{z'''(x)}{\sqrt{2}}$}\)\cosh\Big(\text{\small$\frac{x+y}{\sqrt{2}}$}\Big).\end{align}
Combining this with $\xi\ne 0$ gives $z'''(x)\ne 2 z'(x)$.
Consequently, we obtain  (ii).

Conversely, suppose that $L$ is given by \e{6.2}, where  $z(x)$ is a timelike curve with constant speed $\sqrt{2}$ lying in the light cone $\mathcal LC$ satisfying $\<z'',z''\>=4$ and $z'''\ne 2z'$.  Then, $L$ satisfies \e{6.8} with $\xi$ given by \e{6.10}.  Moreover, from the assumption, we have
\begin{align} \label{6.11} \<z,z\>=\<z,z'\>=0,\; \<z,z''\>=-\<z',z'\>=2, \; \<z'',z''\>=4.\end{align}
Hence, we know from \e{6.9} and \e{6.11} that the induced metric tensor is given by \e{6.3}. Consequently, we see from \e{6.8} that the second fundamental form of $\psi$ satisfies $h(\x,\y)=0$. Therefore, the immersion $\psi$ is minimal.

\vskip.04in
\noindent {\it Case} (ii):  $\eta\ne 0$. By applying formula \e{2.2} of Gauss, \e{6.3}-\e{6.5} and \e{6.7}, we obtain
\begin{equation}\begin{aligned}\label{6.12}&L_{xx}=\sech^2\Big(\text{\small$\frac{x+y}{\sqrt{2}}$}\Big)\xi-\sqrt{2}\tanh \Big(\text{\small$\frac{x+y}{\sqrt{2}}$}\Big)L_x ,\;\;\\& L_{xy}=-\sech^2\Big(\text{\small$\frac{x+y}{\sqrt{2}}$}\Big)L,\;\;  \\&L_{yy}=\sech^2\Big(\text{\small$\frac{x+y}{\sqrt{2}}$}\Big)\eta-\sqrt{2}\tanh \Big(\text{\small$\frac{x+y}{\sqrt{2}}$}\Big)L_y 
.\end{aligned} \end{equation}
The compatibility conditions of \e{6.12} are given by 
\begin{align}\label{6.13}&\tilde\nabla_\y \xi=\sqrt{2}\xi \tanh\Big(\text{\small$\frac{x+y}{\sqrt{2}}$}\Big),\;\; \tilde \nabla_\x \eta=\sqrt{2}\eta \tanh\Big(\text{\small$\frac{x+y}{\sqrt{2}}$}\Big).\end{align}
Solving \e{6.13} gives $$\xi= A(x)\cosh^2\Big(\text{\small$\frac{x+y}{\sqrt{2}}$}\Big) ,\;\; \eta= B(y)\cosh^2\Big(\text{\small$\frac{x+y}{\sqrt{2}}$}\Big)$$ for some $\mathbb E^{m+1}_s$-valued functions $A(x),B(y)$ satisfying $\<A,B\>=0$. Substituting these into \e{6.12} yields
\begin{equation}\begin{aligned}\label{6.14}&L_{xx}=A(x)-\sqrt{2}\tanh \Big(\text{\small$\frac{x+y}{\sqrt{2}}$}\Big)L_x  ,\;\; \\& L_{xy}=-\sech^2 \Big(\text{\small$\frac{x+y}{\sqrt{2}}$}\Big)  L,\;\;  \\& L_{yy}=B(y)-\sqrt{2}\tanh \Big(\text{\small$\frac{x+y}{\sqrt{2}}$}\Big)L_y .\end{aligned} \end{equation}
After solving system \e{6.14} we obtain 
\begin{align}\label{6.15}&L(x,y)=(z(x)+w(y))\tanh\Big(\text{\small$\frac{x+y}{\sqrt{2}}$}\Big)-\frac{z'(x)+w'(y)}{\sqrt{2}},
\\&\label{6.16}  A(x)=\sqrt{2}z'(x)-\frac{z'''(x)}{\sqrt{2}}
,\;\;  B(y)=\sqrt{2} w'(y)-\frac{w'''(y)}{\sqrt{2}} \end{align}
for some $\mathbb E^{m+1}_s$-valued functions $z,w$.
From \e{6.15} and $\<L,L\>=-1$, we obtain condition (iii.1) of the theorem.

After differentiating  \e{6.15} we get
\begin{equation}\begin{aligned}\label{6.18}&L_x=z'(x)\tanh \Big(\text{\small$\frac{x+y}{\sqrt{2}}$}\Big)+\frac{z(x)+w(y)}{\sqrt{2}}\sech^2\Big(\text{\small$\frac{x+y}{\sqrt{2}}$}\Big)-\frac{z''(x)}{\sqrt{2}},\\&L_y=w'(y)\tanh \Big(\text{\small$\frac{x+y}{\sqrt{2}}$}\Big) + \frac{z(x)+w(y)}{\sqrt{2}}\sech^2\Big(\text{\small$\frac{x+y}{\sqrt{2}}$}\Big)-\frac{w''(y)}{\sqrt{2}}. \end{aligned} \end{equation}
Since the metric tensor of $\M$ is given by \e{6.3}, we find
\begin{align}\label{6.19}&\<L_x,L_x\>=\<L_y,L_y\>=0, \;\;\<L_x,L_y\>=-\sech^2\Big(\text{\small$\frac{x+y}{\sqrt{2}}$}\Big). \end{align}
From \e{6.14} and \e{6.16}, we obtain
\begin{equation}\begin{aligned}\label{6.20}&L_{xx}=\sqrt{2}z'(x)-\frac{z'''(x)}{\sqrt{2}}-\sqrt{2}\tanh \Big(\text{\small$\frac{x+y}{\sqrt{2}}$}\Big)L_x  ,\;\; \\& L_{xy}=-\sech^2 \Big(\text{\small$\frac{x+y}{\sqrt{2}}$}\Big)  L,\;\;  \\& L_{yy}=\sqrt{2} w'(y)-\frac{w'''(y)}{\sqrt{2}}-\sqrt{2}\tanh \Big(\text{\small$\frac{x+y}{\sqrt{2}}$}\Big)L_y .\end{aligned} \end{equation}
Because $\<L,L\>=-1$, we have $\<L_{xx},L\>=-\<L_x,L_x\>=0$. Thus, we derive from \e{6.19} and \e{6.20} that
\begin{align}\label{6.21}\sqrt{2}&\<z+w,2 z'-z'''\>\tanh\Big(\text{\small$\frac{x+y}{\sqrt{2}}$}\Big) =\<z'+w',2z'-z'''\>. \end{align}
Similarly, from $\<L_{yy},L\>=-\<L_y,L_y\>=0$, we have
\begin{align}\label{6.22}\sqrt{2}&\<z+w,2 w'-w'''\>\tanh\Big(\text{\small$\frac{x+y}{\sqrt{2}}$}\Big) =\<z'+w',2w'-w'''\>. \end{align}
These give conditions (iii.2) and (iii.3). Therefore, we obtain case (iii).

Conversely, if  $L$ is given by \e{6.2} such that $z(x),w(y)$ satisfy conditions (iii.1), (iii.2) and (iii.3), then 
we know from \e{6.2} that $L$ satisfies \e{6.20}. Also, it follows from \e{6.2} and (iii.1) that  $\<L,L\>=-1$. Thus, we have
\begin{align}\label{6.23}&\<L,L_x\>=\<L,L_y\>=0,\end{align}
which implies that
\begin{align}\label{6.24}&\<L_x,L_x\>=-\<L,L_{xx}\>,\; \<L_x,L_y\>=-\<L,L_{xy}\>,\; \<L_y,L_y\>=-\<L,L_{yy}\>.\end{align}
By applying \e{6.20}, (iii.1) and the first equation in \e{6.24}, we obtain 
\begin{align}\label{6.25}&\<L_x,L_x\>=-\<L,L_{xx}\>=\sqrt{2}\tanh \Big(\text{\small$\frac{x+y}{\sqrt{2}}$}\Big)\<L_x,L_x\>,\end{align} which yields $\<L_x,L_x\>=0$. Similarly, from \e{6.20} and (iii.3) we find $\<L_y,L_y\>=0$.
Also, after applying (iii.1), \e{6.20} and the second equation in \e{6.24}, we obtain $\<L_x,L_y\>=-\sech^2 (\text{\small$\frac{x+y}{\sqrt{2}}$}) $. Consequently, the induced metric tensor via $L$ is given by \e{6.3}. Therefore, it follows from \e{3.9} and the second equation in \e{6.20} that the immersion $\psi:\M\to H^m_s(-1)$ is minimal.
\end{proof}

\section{Explicit examples of minimal Lorentz surfaces in $S^m_s(1)$.}

There exist infinitely many spacelike curves with constant speed 2 lying in the light cone ${\mathcal LC}\subset \mathbb E^{m+1}_s$ satisfying $\<z'',z''\>=0$ and $z'''\ne 0$.

\begin{example} {\rm Consider the curve $z=z(x)$ in $\mathbb E^7_3$ defined by
\begin{equation}\begin{aligned}\notag & z(x)=\left(a \cosh px, \text{\Small$\frac{\sqrt{4r^2+a^2 p^2(p^2-r^2)}}{q\sqrt{r^2-q^2}}$} \cosh qx, \text{\Small$\frac{\sqrt{4q^2+a^2 p^2(p^2-q^2)}}{r\sqrt{r^2-q^2}}$} \sinh rx,    \right.\\& \hskip.7in a \sinh px, \text{\Small$\frac{\sqrt{4r^2+a^2 p^2(p^2-r^2)}}{q\sqrt{r^2-q^2}}$}  \sinh qx, \text{\Small$ \frac{\sqrt{4q^2+a^2 p^2(p^2-q^2)}}{r\sqrt{r^2-q^2}}$}\cosh rx,
\\&\left. \hskip1in \text{\Small$\frac{\sqrt{4(q^2+r^2)+a^2(p^2-r^2)(p^2-q^2)}}{qr}$}\right)
,\end{aligned}\end{equation}
where $a,p,q,r$ are real numbers satisfying $p>r>q> 0$. It is easy to verify that $z$ is a spacelike curve of constant speed 2 lying in $\mathcal LC$ satisfying $\<z'',z''\>=0,z'''\ne 0$.

It is direct to verify that the immersion defined by \begin{align}\notag & L(x,y)=\frac{z(x)}{x+y}-\frac{z'(x)}{2}\end{align}
  gives rise to minimal Lorentz surfaces of constant curvature one in $S^6_3(1)$. Thus, there exist infinitely many minimal Lorentz surfaces of type (b) of Theorem \ref{T:5.1}.
}\end{example}

There exist infinitely many pairs $(z,w)$ of  curves  satisfying conditions (c.1), (c.2) and (c.3) of Theorem \ref{T:5.1}. Here we provide some examples of such pairs.

\begin{example} {\rm Let $p,q,r$ be positive numbers satisfying
$$\text{\Small$ \frac{315}{4}$} p^2>80+189 r^2-64 q^2>35 p^2.$$
Consider curves $z(x)$ and $w(y)$ in $\mathbb E^{14}_6$ defined by
 \begin{equation}\begin{aligned}\notag & z(x)=\(  \text{\Small$\frac{\sqrt{256 q^2+369 r^2}}{4\sqrt{15}}$}\cosh 2x, \text{\Small$\frac{\sqrt{16q^2+609r^2}}{4\sqrt{15}}$}\sinh 4x,r\cosh 5x,0,0,0,\right.\\& \hskip.5in \left. \text{\Small$\frac{\sqrt{256 q^2+369 r^2}}{4\sqrt{15}}$}\sinh 2x,  \text{\Small$ \frac{\sqrt{16q^2+609r^2}}{4\sqrt{15}}$}\cosh 4x, r\sinh 5x,0,0,0,q,0 \),
 \\& w(y)=\(0,0,0,p \cosh\! \(\!\text{\Small$\frac{3y}{2}$}\!\),  \text{\Small$\frac{\sqrt{320+225p^2+756r^2-256q^2}}{8\sqrt{15}}$} \sinh 2y,\right. \\& \hskip.6in
  \text{\Small$ \frac{\sqrt{315p^2+1024 q^2-3024r^2-1280}}{4\sqrt{15}}$} \sinh y, 0,0,0,p \sinh\! \(\!\text{\Small$\frac{3y}{2}$}\!\), \\& \hskip.5in    \text{\Small$\frac{\sqrt{320+225p^2+756r^2-256q^2}}{8\sqrt{15}}$}\cosh 2y,
 \text{\Small$ \frac{\sqrt{315p^2+1024 q^2-3024r^2-1280}}{4\sqrt{15}}$} \cosh y, \\& \hskip.9in  \left. 0,\text{\Small$ \frac{\sqrt{320+756r^2-35p^2-256 q^2}}{8}$}\)
 .\end{aligned}\end{equation} 
 Then $z,w$ are constant speed curves lying in the light cone $\mathcal LC\subset \mathbb E^{14}_6$  satisfying 
\begin{equation}\begin{aligned}\notag & \<z',z'\>=\text{\Small$\frac{64q^2-189r^2}{20}$},\;\<w',w'\>=\text{\Small$\frac{80+189r^2-64q^2}{20}$},\; \\& \<z,w\>=\<z,z'''\>=\<z',z'''\>=\<z'',z''\>=\<w,w'''\>=\<w',w'''\>=\<w'',w''\>=0.\end{aligned}\end{equation}
Moreover, it is easy to see that conditions (c.1), (c.2) and (c.3) are satisfied. 
It is straightforward to verify that the  immersion: $$L(x,y)=\frac{z(x)+w(y)}{x+y}-\frac{z'(x)+w'(y)}{2}$$
via  $(z,w)$ defines a minimal Lorentz surface  of constant curvature one in $S^{13}_6(1)$.
}\end{example}

\section{Explicit examples of minimal Lorentz surfaces in $H^m_s(-1)$.}

There  exist infinitely many timelike curves with constant speed $\sqrt{2}$ lying in the light cone ${\mathcal LC}\subset \mathbb E^{m+1}_{s+1}$ satisfying $\<z'',z''\>=4$ and $z'''\ne 2z'$. 

\begin{example} {\rm Let $a,b,p,q$ be positive numbers satisfying 
$$p^2<\text{\small $\frac{4+a^2}{2+a^2}$}<q^2 \;\; {\rm and}\;\; b^2> \text{\small $\frac{a^2(q^2-1)(1-p^2)+2(p^2+q^2-2)}{p^2q^2}$}.  $$

Consider the curve $z=z(x)$ in $\mathbb E^8_4$ defined by
\begin{equation}\begin{aligned}\notag & z(x)=\left(b,a \cosh x,\text{\Small$\frac{\sqrt{q^2(2+a^2)-(4+a^2)}}{p\sqrt{q^2-p^2}}$} \sinh px,  \text{\Small$\frac{\sqrt{4+a^2- p^2(2+a^2)}}{q\sqrt{q^2-p^2}}$} \sinh qx,   \right.\\& \hskip.7in a \sinh px, \text{\Small$\frac{\sqrt{q^2(2+a^2)-(4+a^2)}}{p\sqrt{q^2-p^2}}$} \cosh px,  \text{\Small$\frac{\sqrt{4+a^2- p^2(2+a^2)}}{q\sqrt{q^2-p^2}}$} \cosh qx
\\&\left. \hskip1in \text{\Small$\frac{\sqrt{b^2p^2q^2-a^2(q^2-1)(1-p^2)-2(p^2+q^2-2)}}{pq}$}\right)
.\end{aligned}\end{equation}
It is easy to see that $z$ is curve lying in $\mathcal LC$  satisfying $\<z',z'\>=-2, \<z'',z''\>=4$ and $z'''\ne 2z'$.
A direct computation shows that 
\begin{align}\notag & L(x,y)=z(x)\tanh\!\Big(\text{\small$\frac{x+y}{\sqrt{2}}$}\Big)-\frac{z'(x)}{\sqrt{2}},\end{align}
 defines a minimal Lorentz surfaces of constant curvature $-1$ in $H^7_3(-1)$. Hence, there exist infinitely many minimal Lorentz surfaces of type (ii) of Theorem \ref{T:6.1}.
}\end{example}

There are many pairs $(z,w)$ of  curves  satisfying conditions  (iii.1)-(iii.3) of Theorem \ref{T:6.1}. Here we provide infinitely many examples of such pair of curves.

\begin{example} {\rm Let $a,b,p,q,r,s$ be positive numbers satisfying
 \begin{equation}\begin{aligned}\label{8.1}& a, b<1,\; \; p,q,r,s>1,\; \;   p^2<\text{\small$\frac{1}{1-b^2}$}<q^2,\; r^2<\text{\small$\frac{1}{1-a^2}$}<s^2,\\&
b^2<\text{\small$\frac{p^2+q^2-2}{p^2q^2}$},\;\;a^2<\text{\small$\frac{r^2+s^2-2}{r^2s^2}$} .\end{aligned}\end{equation}
Consider curves $z(x)$  and $w(y)$ in $\mathbb E^{14}_8$ defined by
 \begin{equation}\begin{aligned}\notag& z(x)=\Bigg(b, \text{\Small$\frac{\sqrt{p^2+q^2-b^2p^2q^2-2} }{\sqrt{(p^2-1)(q^2-1)}}$}\cosh x,\text{\Small$\frac{\sqrt{1-p^2(1-b^2)} }{\sqrt{(q^2-p^2)(q^2-1)}}$}\sinh qx, \\& \hskip.6in  \text{\Small$\frac{\sqrt{q^2(1-b^2)-1} }{\sqrt{(q^2-p^2)(p^2-1)}}$}\sinh px,
0,0,0,0,   \text{\Small$\frac{\sqrt{p^2+q^2-b^2p^2q^2-2} }{\sqrt{(p^2-1)(q^2-1)}}$}\sinh x,\\& \hskip.8in  \text{\Small$\frac{\sqrt{1-p^2(1-b^2)} }{\sqrt{(q^2-p^2)(q^2-1)}}$}\cosh qx,\text{\Small$\frac{\sqrt{q^2(1-b^2)-1} }{\sqrt{(q^2-p^2)(p^2-1)}}$}\cosh px,0,0,0\Bigg),
 \end{aligned}\end{equation}
  \begin{equation}\begin{aligned}\notag& 
 w(y)=\Bigg(0,0,0,0, a, \text{\Small$\frac{\sqrt{r^2+s^2-a^2r^2s^2-2} }{\sqrt{(r^2-1)(s^2-1)}}$}\cosh y,\text{\Small$\frac{\sqrt{1-r^2(1-a^2)} }{\sqrt{(s^2-r^2)(s^2-1)}}$}\sinh sy, \\& \hskip.6in  \text{\Small$\frac{\sqrt{s^2(1-a^2)-1} }{\sqrt{(s^2-r^2)(r^2-1)}}$}\sinh ry,
0,0,0, \text{\Small$\frac{\sqrt{r^2+s^2-a^2r^2s^2-2} }{\sqrt{(r^2-1)(s^2-1)}}$}\sinh y,\\& \hskip.8in  \text{\Small$\frac{\sqrt{1-r^2(1-a^2)} }{\sqrt{(s^2-r^2)(s^2-1)}}$}\cosh qy,\text{\Small$\frac{\sqrt{s^2(1-a^2)-1} }{\sqrt{(s^2-r^2)(r^2-1)}}$}\cosh py\Bigg).\end{aligned}\end{equation}
It is easy to verify that $z$ and $w$ satisfy conditions (iii.1), (iii.2) and (iii.3).
The associated map
 \begin{align}\notag& L(x,y)=(z(x)+w(y))\tanh\Big(\!\text{\Small$\frac{x+y}{\sqrt{2}}$}\!\Big)-\frac{z'(x)\!+\!w'(y)}{\sqrt{2}}\end{align}
 defines a minimal Lorentz surface  of constant curvature $-1$ in $H^{13}_7(-1)$.
}\end{example}

\begin{remark} There exist many  positive numbers $a,b,p,q,r,s$ satisfying the conditions given in \e{8.1}. For instance, $a=b=1/\sqrt{2}, p=r=1.1$ and $q=s=1.5$ satisfy all conditions given in \e{8.1}.\end{remark}

  \end{document}